\documentclass[11pt]{article}
\usepackage{sustyle}
\usepackage{tikz}
\usetikzlibrary{decorations.text,shapes,snakes,arrows.meta}

\usepackage{tcolorbox}
\usepackage[hidelinks]{hyperref}
\usepackage{algorithm}
\usepackage[noend]{algpseudocode}

\usepackage[T1]{fontenc}

\usepackage{setspace}
\usepackage{pgfplots}
\usepackage{bchart}
\def\d{\mathrm{d}}

\usepackage{verbatim}

\DeclareMathOperator{\Tr}{Tr}
\DeclareMathOperator{\msgn}{msgn}

\newcommand{\citep}{\cite}
\newcommand{\citet}{\cite}
\newcommand{\dW}{\delta W}

\title{Isotropic Curvature Model for Understanding Deep Learning Optimization: Is Gradient Orthogonalization Optimal?}

\begin{document}

\author{Weijie Su\\[0.5em]University of Pennsylvania}
\date{October 31, 2025}

\maketitle

\begin{abstract}

In this paper, we introduce a model for analyzing deep learning optimization over a single iteration by leveraging the matrix structure of the weights. We derive the model by assuming isotropy of curvature, including the second-order Hessian and higher-order terms, of the loss function across all perturbation directions; hence, we call it the isotropic curvature model. This model is a convex optimization program amenable to analysis, which allows us to understand how an update on the weights in the form of a matrix relates to the change in the total loss function. As an application, we use the isotropic curvature model to analyze the recently introduced Muon optimizer and other matrix-gradient methods for training language models. First, we show that under a general growth condition on the curvature, the optimal update matrix is obtained by making the spectrum of the original gradient matrix more homogeneous---that is, making its singular values closer in ratio---which in particular improves the conditioning of the update matrix. Next, we show that the orthogonalized gradient becomes optimal for the isotropic curvature model when the curvature exhibits a phase transition in growth. Taken together, these results suggest that the gradient orthogonalization employed in Muon and other related methods is directionally correct but may not be strictly optimal. Finally, we discuss future research on how to leverage the isotropic curvature model for designing new optimization methods for training deep learning and language models.

\end{abstract}

\section{Introduction}
\label{sec:introduction}

If asked before 2025 which optimization method is used for training deep learning and language models, most practitioners would have answered Adam \cite{kingma2015}. Indeed, since its proposal in 2014, Adam has become almost synonymous with deep learning optimization. Although many potential competitors were introduced, they either failed to scale their performance to larger models, worked well only with sophisticatedly tuned hyperparameters, or the marginal improvement was not significant enough to motivate practitioners to switch from Adam \cite{sun2020optimization}. The consensus in both industry and academia was that Adam's dominance was secure for years to come. For instance, the work on Adam received the Test of Time Award in early 2025 from the International Conference on Learning Representations (ICLR), one of the premier machine learning conferences.

However, Adam's dominance was challenged by a new optimizer called Muon \cite{jordan2024muon} in December 2024, which was shown by its developers to outperform Adam and other existing optimizers on relatively small-scale language models. As with most optimizers when first proposed, many questioned whether the superior performance of Muon could scale to larger, commercial-grade language models. Indeed, many, including myself, were skeptical, partly because Muon's structure is less amenable to parallelization than Adam's. However, this skepticism quickly subsided as a host of Muon variants were developed to incorporate various practical considerations \cite{pethick2025training,an2025asgo,ahn2025dion,riabinin2025gluon,lau2025polargrad,he2025low,zhang2025adagrad,li2025normuon,he2025demuon,huang2025limuon,khaled2025muonbp,gruntkowska2025drop} and its superiority was quickly demonstrated on larger, industry-scale language models \cite{liu2025muon,essentialai2025practical,kimi2025k2}. In particular, in February 2025, Muon was shown to require only about 52\% of the FLOPs to achieve the same empirical performance as AdamW \cite{loshchilov2019decoupled}, a popular variant of Adam, on a 16-billion-parameter language model using 5.7 trillion tokens \cite{liu2025muon}. In July, \cite{kimi2025k2} used Muon to train a frontier language model with 32 billion activated parameters and 1 trillion total parameters. Mounting evidence now suggests that Muon is becoming, and perhaps already is, the new go-to optimizer for training language models in industry, which is remarkable especially given that it all happened in less than one year.

Interestingly, the operational mechanism of Muon is both intuitive and, for many trained in mathematical optimization, surprising. The intuitive part is that Muon recognizes that weights in deep learning architectures are structured as matrices, such as in a feedforward layer or the weight matrices for query, key, and value in an attention mechanism. This is in contrast to Adam, which neglects this matrix structure by vectorizing all weights. To introduce Muon, let $f(W)$ denote the total loss function to be minimized, where $W$ denotes parameters in the form of a matrix (we omit other parameters for simplicity). The gradient on a mini-batch of training samples, denoted $G$, is also a matrix of the same size. The Muon optimizer updates the weights not along the direction of the original gradient $G$, but rather along the direction of the orthogonalized gradient. Formally, let $G = U \Sigma V^\top$ be the (compact) singular value decomposition (SVD);\footnote{In practice, the orthogonalization is applied to an exponential moving average of the gradients.} then
\[
W \leftarrow W - \gamma U V^\top,
\]
where $\gamma$ is the learning rate. Equivalently, the update direction $U V^\top$, often denoted $\msgn(G)$, is the unitary part from the polar decomposition of the gradient $G$ \cite{amsel2025polar,lau2025polargrad,crawshaw2025exploration}; a variant of Muon was therefore named PolarGrad to better reflect its connection to classical numerical linear algebra \cite{lau2025polargrad}. While Muon is not the first optimizer to incorporate the matrix structure of gradients \cite{carlson2015preconditioned,carlson2015stochasticRBM,gupta2018shampoo,tuddenham2022orthogonalising,vyas2024soap}, a notable design feature of Muon is its use of polynomial methods to approximate the unitary matrix $U V^\top$ efficiently in a GPU environment.

The immediate surprise, however, is why all singular values should be made constant. A larger singular value suggests that the corresponding singular space is more ``promising'' for reducing the loss function. Discarding the singular value matrix $\Sigma$ seems to ignore the varying levels of promise across different singular spaces. Indeed, taking a step back, it is surprising that singular values should be made closer at all, let alone perfectly uniform. A secondary but non-trivial question is why the singular spaces of the original gradient matrix should be preserved in the update. These questions are pressing, as without a theoretical foundation, the practice of deep learning optimization risks becoming an exercise in trial and error. To address these puzzles, a surge of work has emerged to understand Muon \cite{li2025muon,kovalev2025understanding,shen2025convergence,sfyraki2025lions,sato2025analysis,chang2025convergence,zhang2025provable}, much of which adopts a spectral norm perspective \cite{bernstein2024old,bernstein2024modular,fan2025implicit,lau2025polargrad,chen2025muon}. However, it is still unclear how gradient orthogonalization contributes to enhanced optimization performance, and whether it is really optimal from a theoretical viewpoint.

In this paper, we propose an analytically tractable optimization program to model Muon and, more generally, matrix-gradient methods. Our model focuses on finding an update that achieves the steepest descent in an ``average'' single iteration. Our rationale is that, to model the change in loss, we can assume that beyond the first-order information (the gradient), all higher-order information, including the Hessian and other curvature information, is isotropic across all possible directions. This isotropy assumption is motivated by the immense scale of deep learning models, especially language models, where the architecture and training process do not inherently favor any particular direction. Furthermore, by averaging over a full batch with a very large number of training samples, curvature might exhibit a convergent law. We also impose certain conditions on how the curvature depends on the perturbation magnitude that are consistent with experimental observations. We call this framework the isotropic curvature model.

By analyzing the isotropic curvature model, we obtain several insights into the effective design of matrix-gradient methods. First, the isotropy of our model's design justifies the preservation of the singular spaces of the original gradient matrix in the update. More importantly, our model offers guidance on how the spectrum of the gradient matrix should be modified. By assuming a certain growth condition on the curvature, our model shows that the optimal update matrix is obtained by pushing the singular values of the original gradient matrix closer to each other while preserving their original ordering, a property we refer to as spectrum homogenization. Next, we prove that when the curvature in the isotropic curvature model transitions abruptly from small to large, optimality is attained by setting all singular values to be equal; that is, orthogonalization is optimal in this asymptotic limit.

Several implications follow from our analysis of the isotropic curvature model. Roughly speaking, the model suggests that the design rationale of Muon and many other matrix-gradient methods is conceptually sound, as they respect the matrix structure of the gradients. However, our model implies that Muon is not strictly optimal. The optimal spectrum transformation is never perfectly uniform, because in practice the curvature does not exhibit the asymptotic behavior required for the optimality of orthogonalization. Determining the optimal spectrum depends on specifying the curvature function, and a possible future direction for designing matrix-gradient optimizers is to first approximate the curvature information and then find the optimal update matrix by solving a simple but large-scale convex optimization program.

\section{Isotropic Curvature Model}
\label{sec:xxx-model}

\subsection{Derivation}

Consider the objective function that we wish to minimize:
\[
f(W) = \frac1N \sum_{i=1}^N L(W; x_i, y_i),
\]
where $W$ is a matrix parameter of size $m \times n$, $L$ is a (cross-entropy) loss function, and $(x_i, y_i)$ is the $i$-th training example and its label. This formulation includes the scenario where $x_i$ is the context and $y_i$ is the token for training large language models. The full objective function involves many other parameters, but here we isolate $W$ to study how its update affects the optimization process. Let $u_i$ be the intermediate data from $x_i$ that serves as input to the matrix $W$. Recognizing that the loss function depends on $x_i$ through the product $W u_i$, we can write the loss as $L(W u_i)$.\footnote{Here we ignore residual connections, so the function depends on $u_i$ only through $W u_i$. We also omit the dependence on the label $y_i$ for simplicity.}

Next, we consider how the objective $f$ changes when $W$ is updated to $W + \dW$ for some small matrix perturbation $\dW$. A good optimizer should find a $\dW$ that reduces $f(W + \dW)$ as much as possible. We Taylor expand the loss function $L$ for each sample, rather than the objective $f$, as this allows us to leverage the matrix structure of the weights $W$. Assuming sufficient smoothness, we expand the loss function up to the quadratic term with a remainder:
\[
\begin{aligned}
L(Wu_i + \dW u_i) =&  L(Wu_i) + \langle \nabla L(Wu_i), \dW u_i \rangle \\
&+ (\dW u_i)^\top \left[ \int_0^1 (1 - t) \nabla^2 L(Wu_i + t \dW u_i) \d t \right] \dW u_i,
\end{aligned}
\]
where $\nabla L$ is the gradient with respect to the entire input argument of $L(\cdot)$. Note that $\int_0^1 (1 - t)\nabla^2 L(x + t \delta x) \d t$ is a weighted average of the Hessians of $L$ along the line segment connecting $x$ and $x + \delta x$. Plugging this into the objective function gives
\[
\begin{aligned}
f(W + \dW)  =& f(W) + \frac1N \sum_{i=1}^N \langle \nabla L(W u_i), \dW u_i \rangle \\
&+ \frac1{N} \sum_{i=1}^N (\dW u_i)^\top \left[ \int_0^1  (1 - t)\nabla^2 L(W u_i + t \dW u_i)\d t \right] \dW u_i.
\end{aligned}
\]
The first-order term simplifies to
\[
\sum_{i=1}^N \langle \nabla L(W u_i), \dW u_i \rangle = \sum_{i=1}^N \langle \dW, \nabla L(W u_i) u_i^\top \rangle =N \langle \dW, \nabla f(W) \rangle,
\]
where $\langle \cdot, \cdot \rangle$ denotes the Frobenius inner product. Hence,
\[
\begin{aligned}
f(W + \dW)  =& f(W) + \Tr(\dW \nabla f(W)^\top) \\
&+ \frac1{N} \sum_{i=1}^N (\dW u_i)^\top \left[ \int_0^1 (1 - t) \nabla^2 L(W u_i + t \dW u_i)  \d t \right] \dW u_i\\
\equiv& f(W) + \Tr(\dW \nabla f(W)^\top) + \frac1{N} \sum_{i=1}^N h(\dW u_i, Wu_i).
\end{aligned}
\]

We approximate $f(W + \dW)$ by replacing the term $\frac1{N} \sum_{i=1}^N h(\dW u_i, Wu_i)$ with a simpler expression that allows us to analyze the optimal $\dW$ within a surrogate model. First, we assume that the singular spaces of the Hessian integral $\int_0^1 (1 - t) \nabla^2 L(W u_i + t \dW u_i)  \d t$, which can be viewed as random due to factors like initialization and mini-batch sampling, are independent of the vector $\dW u_i$. This allows us to approximate
\begin{equation}\label{eq:h_w_w}
\begin{aligned}
h(\dW u_i, Wu_i) &\equiv (\dW u_i)^\top \left[ \int_0^1 (1 - t) \nabla^2 L(W u_i + t \dW u_i)  \d t \right] \dW u_i\\
& \approx \text{average eigenvalue of}\left[\int_0^1 (1 - t)\nabla^2 L(W u_i + t \dW u_i) \d t \right] \times \|\dW u_i\|^2,
\end{aligned}
\end{equation}
where $\|\cdot\|$ throughout the paper denotes the $\ell_2$ norm. It is worth noting that this approximation might not be precise for an individual $i$, but it becomes more faithful when averaged over all $i$.

To further simplify $\frac1{N} \sum_{i=1}^N h(\dW u_i, Wu_i)$, we need to make assumptions on the average spectrum of the Hessian integral. To this end, we impose the \textit{isotropy} assumption that the spectral distribution depends on the perturbation $\dW u_i$ only through its Euclidean norm $\|\dW u_i\|$. This assumption is motivated by the fact that deep learning models, especially large language models, are vast systems where no direction is favored by design. Moreover, weights are often initialized from isotropic Gaussian distributions, and no specific direction is explicitly favored during training. To further simplify, we drop the dependence of $h(\dW u_i, Wu_i)$ on $Wu_i$, which is a limitation of our model but may be plausible since we are conditioning on the current weights $W$. Taken together, under our isotropic assumption, both the average spectrum and $\|\dW u_i\|^2$ in \eqref{eq:h_w_w} are fully determined by $\|\dW u_i\|$. Hence, we introduce a \textit{curvature function} $H$ such that $H(\|\dW u_i\|) \approx h(\dW u_i, Wu_i)$.

This leads to the following optimization program for modeling the one-step update:
\[
\min_{\dW} f(W + \dW)  \approx f(W) + \Tr(\dW \nabla f(W)^\top) + \frac1N \sum_{i=1}^N H(\|\dW u_i\|).
\]
However, the presence of the input data vectors $u_i$ still complicates the analysis. To overcome this, we make another isotropy assumption by assuming all $u_i$ are independent and identically distributed samples from a sphere centered at the origin. Again, the rationale for this isotropy is that no input directions are favored and the data are diverse enough to plausibly span all directions. This allows us to approximate the sum with an expectation: 
\[
\frac1N \sum_{i=1}^N H(\|\dW u_i\|) \approx \E_{\zeta \sim \text{sphere}} H(\|\dW \zeta\|).
\]

Finally, for notational convenience, we let $Q = - \dW$ denote the negative update direction (so the updated weights are $W - Q$). We use $G = \nabla f(W)$ to denote the gradient, which in practice is a stochastic gradient from a mini-batch. The update matrix $Q$ specifies not only the direction but also incorporates the learning rate. This yields the following optimization program:
\begin{equation}\label{eq:our_model}
\min_{Q} -\Tr(Q G^\top) + \E_{\zeta \sim \text{sphere}} H(\|Q \zeta\|).
\end{equation}
We call this the \textit{isotropic curvature model}. Since the radius of the sphere can be absorbed into $H$, we assume for simplicity that $\zeta$ is sampled from the unit sphere in $\R^n$. We remark that this model is not intended to capture a specific iteration in deep learning optimization, which is subject to randomness and arbitrariness. Instead, it seeks to model, in an average or distributional sense, how a single iteration proceeds.

\subsection{Assumptions on Curvature}
\label{sec:grow}

The simplicity of the isotropic curvature model \eqref{eq:our_model} is that it represents complex high-order information as a single univariate, increasing function $H$. To analyze this model, however, we need to make a further assumption on the curvature function $H$. Since $H$ contains all high-order terms, we can assume it grows rapidly. Indeed, to ensure that \eqref{eq:our_model} is well-defined, we need to assume, for example, that $H(r)/r > C$ for some sufficiently large constant $C$ (depending on $G$) when $r$ is large enough. This is almost a minimal requirement for the optimization program to have a well-defined optimal solution. Otherwise, the objective function $-\Tr(Q G^\top) + \E_{\zeta \sim \text{sphere}} H(\|Q \zeta\|)$ could diverge to $-\infty$. For instance, if $G = C' I$, one could take $Q = cI$ for a large constant $C'$ and let $c \to \infty$.\footnote{Similar examples can be constructed for non-square problems.} In practice, $H(r)$ does not grow to infinity as $r \to \infty$; for a very large $r$, the loss would plateau at the level of a random predictor, which is finite. To ensure the uniqueness of the solution in the generic case, we adopt another technical assumption that $H$ is convex. This ensures that \eqref{eq:our_model} is a convex program in $Q$, because $\|Q \zeta\|$ is a convex function of $Q$ and $H$ is an increasing convex function, making their composition $H(\|Q \zeta\|)$ convex.

Next, we examine the growth of $H$ more closely. When $r > 0$ is very small, the high-order information is dominated by the second-order term (the Hessian). Hence, it is reasonable to assume $H(r) \approx c r^2$ for sufficiently small $r$. When $r$ becomes larger, it is no longer reasonable to assume a constant average of all eigenvalues for the Hessian integral. To account for this, we may assume a power law $H(r) \approx c' r^{2 + \alpha}$ and leave the determination of the constant $\alpha$ to empirical experiments. 

From experiments on GPT-2 \cite{radford2019language}, as shown in Figure \ref{fig:growth}, we plot approximations of $H(r)/r^2$. When $r$ is small, $H(r)/r^2$ is nearly constant, indicating that a quadratic approximation holds. When $r$ exceeds approximately $10^{0.5}$, $H(r)/r^2$ grows roughly as a power law $r^{\alpha}$, with a typical positive $\alpha$ around $0.2$. This shows that the high-order terms grow at a super-quadratic rate in practical problems. This is not unexpected, as for larger $r$, third-order and other higher-order effects may kick in, causing faster growth than quadratic. In other words, this growth condition shows that, on average, the Hessian tends to have a larger spectrum when evaluated further from the current point.

\begin{figure}[!htp]
  \centering
  \includegraphics[width=\linewidth]{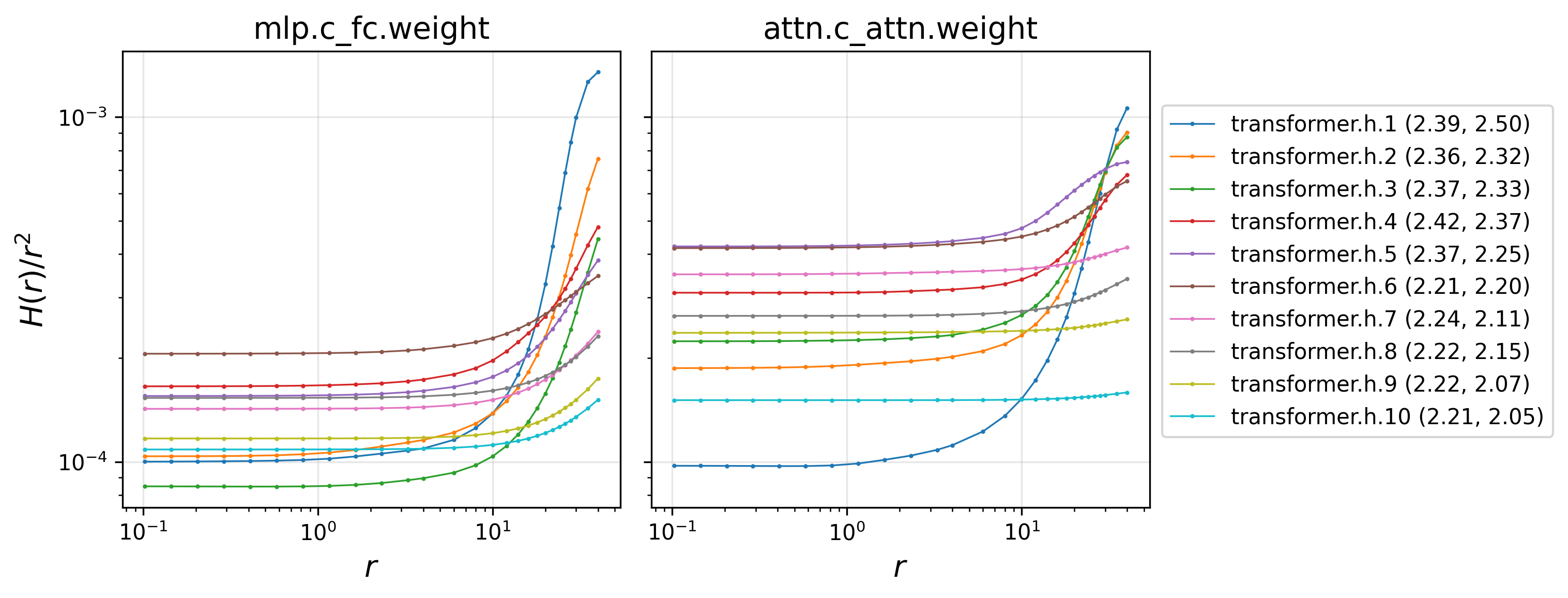}
  \caption{Numerical approximation of $H(r)/r^2$ on GPT-2 (small) \cite{radford2019language}, for fully connected layers (left) and attention layers (right). The values in parentheses in the legend indicate the estimated exponent $2 + \alpha$ of $H(r)$ starting from $r = 10^{0.5}$. For example, $2.39$ and $2.50$ in the first line are the exponents for the left and right panels, respectively. We sample $100$ random directions $\delta W$ from a standard normal distribution, sample the first $300$ examples from the C4-newslike validation split \cite{raffel2020exploring} and truncate each example to $100$ tokens. For each radius $r$, we construct perturbations $\dW u_i$, where $u_i$ is the input to the layer, and renormalize each $\dW u_i$ to have norm $r$. We then compute $H$ by subtracting $L(Wu_i) + \langle \nabla L(Wu_i), \dW u_i \rangle$ from $L(Wu_i + \dW u_i)$. This yields a total of $100 \times 300 \times (100-1) = 2{,}970{,}000$ remainder terms for approximating $H(r)/r^2$ per radius $r$. Note that the growth of $H$ will eventually plateau for very large $r$.}
  \label{fig:growth}
\end{figure}

\section{Analysis of the Model}
\label{sec:insights-into-muon}

We present three results in three subsections, based on progressively stronger assumptions. Proofs are provided in Section~\ref{sec:proof}.

\subsection{Alignment of Singular Spaces}
\label{sec:sing-space-unch}

The isotropic curvature model is rotationally invariant in the following sense: if the gradient $G$ is replaced by $O_1 G O_2$ for two orthogonal matrices $O_1$ and $O_2$, then the objective function
\[
-\Tr(Q G^\top) + \E_{\zeta \sim \text{sphere}} H(\|Q \zeta\|)
\]
remains invariant if the update matrix $Q$ is replaced by $O_1 Q O_2$.

This observation naturally suggests an alignment of singular spaces between the gradient matrix and the optimal update from the isotropic curvature model, as stated in the following proposition. Below, we only assume that the optimization program~\eqref{eq:our_model} admits a finite solution, which is implied, for example, if $H$ grows sufficiently fast. This condition is weaker than the super-quadratic growth assumption introduced in Section~\ref{sec:sh}.

\begin{proposition}\label{prop:orth_spaces}
There exists a global solution $Q^\star$ to the optimization program of the isotropic curvature model
\begin{equation}\nonumber
\min_{Q} -\Tr(Q G^\top) + \E_{\zeta \sim \textnormal{sphere}} H(\|Q \zeta\|)
\end{equation}
such that the singular spaces of $Q^\star$ and $G$ are aligned.
\end{proposition}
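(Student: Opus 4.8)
The plan is to split the objective $\Phi(Q) := -\Tr(Q G^\top) + \mathcal{H}(Q)$, where $\mathcal{H}(Q) := \E_{\zeta \sim \text{sphere}} H(\|Q\zeta\|)$, into a part that depends only on the \emph{spectrum} of $Q$ and a part that depends only on its \emph{orientation relative to} $G$, and then to optimize the orientation while holding the spectrum fixed. The first observation is that $\mathcal{H}(Q)$ depends on $Q$ only through its singular values: writing a singular value decomposition $Q = U D V^\top$, we have $\|Q\zeta\| = \|D V^\top \zeta\|$, and since $V^\top\zeta$ is again uniform on the unit sphere, $\mathcal{H}(Q) = \E_{\eta\sim\text{sphere}} H(\|D\eta\|)$, which by the permutation symmetry of the sphere is a function of the multiset of singular values alone. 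Consequently, among all matrices with a prescribed tuple of singular values, minimizing $\Phi$ amounts to maximizing $\Tr(Q G^\top)$.

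The second ingredient is von Neumann's trace inequality: for $Q,G$ of the same size, $\Tr(Q G^\top) \le \sum_i \sigma_i(Q)\,\sigma_i(G)$, where the $\sigma_i$'s are listed in non-increasing order, and the bound is attained when $Q$ and $G$ are simultaneously diagonalized by a common pair of orthogonal factors, i.e. when $Q$ has the same left and right singular vectors as $G$ with matched ordering of singular values. So I would argue as follows. By hypothesis the program admits a finite minimizer $Q_0$; let $G = U\Sigma_G V^\top$ be an SVD of $G$ with $\Sigma_G$ carrying the singular values of $G$ in non-increasing order, and let $\Sigma_{Q_0}$ be the (rectangular) diagonal matrix holding the singular values of $Q_0$ in non-increasing order. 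Set $Q^\star := U \Sigma_{Q_0} V^\top$. Then $Q^\star$ has exactly the same singular values as $Q_0$, so $\mathcal{H}(Q^\star) = \mathcal{H}(Q_0)$, while $\Tr(Q^\star G^\top) = \sum_i \sigma_i(Q_0)\sigma_i(G) \ge \Tr(Q_0 G^\top)$. Hence $\Phi(Q^\star) \le \Phi(Q_0) = \min_Q \Phi(Q)$, so $Q^\star$ is itself a global minimizer, and by construction its singular spaces coincide with those of $G$.

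Notably, this uses no property of $H$ beyond the standing assumption that a finite minimizer exists — not monotonicity, not convexity. The only step needing slight care is the reduction of $\mathcal{H}$ to a spectral function, which rests on the rotational invariance of the uniform measure on the sphere together with the orthogonal invariance of $\|\cdot\|$; in the non-square case $m\ne n$ one simply treats $\Sigma_G$ and $\Sigma_{Q_0}$ as $m\times n$ rectangular diagonal matrices and von Neumann's inequality applies with the sum over $i = 1,\dots,\min(m,n)$. Degenerate configurations (repeated or vanishing singular values) do not affect the conclusion, since "alignment of singular spaces" is to be read as the existence of a common pair of orthogonal factors, which $Q^\star = U\Sigma_{Q_0}V^\top$ exhibits by construction. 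I expect the only point a careful reader might want spelled out is the precise equality case of von Neumann's trace inequality, but even that is not strictly required here: the inequality alone certifies that $Q^\star$ does at least as well as $Q_0$, which is all the proposition claims.
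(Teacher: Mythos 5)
Your proof is correct and takes essentially the same route as the paper: rotational invariance of the uniform sphere measure reduces $\E_\zeta H(\|Q\zeta\|)$ to a spectral function, and von Neumann's trace inequality then lets you replace any given minimizer by one sharing $G$'s singular spaces without increasing the objective.
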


That is, there exist orthogonal matrices $U$ and $V$ and diagonal matrices with nonnegative entries $\Sigma$ and $\Sigma^\star$ such that $G = U\Sigma V^\top$ and $Q^\star = U\Sigma^\star V^\top$. This invariance of the row and column spaces would be lost if the update $Q$ were treated as a vector, neglecting its matrix structure. This result is consistent with the orthogonalization in Muon, which modifies only the spectrum of the gradient matrix while leaving both its left and right singular spaces unchanged.\footnote{Note that this is not the case for some matrix-gradient methods such as SOAP~\cite{vyas2024soap}.}

The statement of Proposition~\ref{prop:orth_spaces} uses ``there exists'' because, in the degenerate case where $G$ has repeated singular values, the singular vectors are defined only up to a unitary rotation within each degenerate block. In the \textit{generic} case, every global solution $Q^\star$ necessarily has singular spaces aligned with those of $G$.

\subsection{Spectrum Homogenization}
\label{sec:sh}

Building on the alignment of singular spaces, we now use the isotropic curvature model to analyze the spectrum of the optimal update, which provides insight into orthogonalized gradient methods.

The discussion in Section~\ref{sec:grow} and the empirical observations in Figure~\ref{fig:growth} suggest that $H(r) \propto r^{2 + \alpha}$ for $\alpha > 0$ over a certain range of $r$. Then $H(\sqrt{x}) \propto x^{1 + \alpha/2}$. Note that $x^{1 + \alpha/2}$ is a convex function of $x$. This motivates the following assumption to characterize the growth of the curvature function~$H$.

\begin{assumption}\label{ass:homo}
The curvature function $H$, defined on $[0, \infty)$, is strictly increasing, and the function $x \mapsto H(\sqrt{x})$ is convex in $x$.
\end{assumption}

Roughly speaking, this assumption corresponds to a super-quadratic growth of the curvature. Note that this assumption also implies that $H(r)$ is convex in $r$.

Any curvature function satisfying Assumption~\ref{ass:homo} grows sufficiently fast. This implies the assumption needed for Proposition~\ref{prop:orth_spaces}, which allows us to simultaneously unitarily diagonalize $Q^\star$ and $G$: let $G = U\Sigma V^\top$ and $Q^\star = U\Sigma^\star V^\top$ be the SVDs of $G$ and $Q^\star$, respectively. Write $\Sigma = \mathrm{diag}(\sigma_1, \ldots, \sigma_{\min\{m,n\}})$ and $\Sigma^\star = \mathrm{diag}(\sigma^\star_1, \ldots, \sigma^\star_{\min\{m,n\}})$. By definition, these singular values are all nonnegative.

\begin{theorem}\label{thm:homo}
Under Assumption~\ref{ass:homo}, there exists a global solution $Q^\star$ to the isotropic curvature model whose spectrum preserves the ordering of the spectrum of $G$ but is more homogeneous. That is, the following two statements hold:
\begin{itemize}
\item[(a)]
For any $1 \le i, j \le \min\{m, n\}$, if $\sigma_i \ge \sigma_j$, then $\sigma_i^\star \ge \sigma_j^\star$.
\item[(b)] For all $1 \le i, j \le \min\{m, n\}$, we have\footnote{By convention, we take $\frac00 = 1$, $\frac10 = \infty$, and $\infty \le \infty$.}
\begin{equation}\label{eq:homog}
\frac{\max\{\sigma^\star_i, \sigma^\star_j\}}{\min\{\sigma^\star_i, \sigma^\star_j\}} \le \frac{\max\{\sigma_i, \sigma_j\}}{\min\{\sigma_i, \sigma_j\}}.
\end{equation}
\end{itemize}
\end{theorem}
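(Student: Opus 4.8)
The idea is to use Proposition~\ref{prop:orth_spaces} to pass from the matrix problem to a scalar convex program in the singular values, and then read off (a) and (b) from symmetry-plus-exchange arguments.

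\textbf{Step 1: scalarization.} Assumption~\ref{ass:homo} forces $H$ to grow fast enough that Proposition~\ref{prop:orth_spaces} applies, so the optimal value of the isotropic curvature model is attained at some $Q=U\tilde\Sigma V^\top$ with $\tilde\Sigma\succeq0$ diagonal and $G=U\Sigma V^\top$. For such $Q$ one has $\Tr(QG^\top)=\sum_i\tilde\sigma_i\sigma_i$, and, crucially, $\|Q\zeta\|=\|\tilde\Sigma V^\top\zeta\|$ with $V^\top\zeta$ again uniform on the sphere, so the objective collapses to $-\sum_{i\le k}\sigma_i\tilde\sigma_i+\E_{\xi\sim\text{sphere}}H\big(\sqrt{\sum_{i\le k}\tilde\sigma_i^2\xi_i^2}\big)$ with $k=\min\{m,n\}$. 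Substituting $t_i=\tilde\sigma_i^2$ and $\Phi(s)=H(\sqrt s)$ — which is strictly increasing and, by Assumption~\ref{ass:homo}, convex — turns this into $\min_{t\ge0}\ \Psi(t)$ where $\Psi(t)=-\sum_i\sigma_i\sqrt{t_i}+\E_\xi\,\Phi\big(\sum_i t_i\xi_i^2\big)$. This is convex in $t$ ($-\sqrt{t_i}$ is convex; $t\mapsto\Phi(\sum_i t_i\xi_i^2)$ is a convex function of a linear map, then averaged). Since the minimum over aligned $Q$ equals the global minimum, it suffices to produce a minimizer $t^\star$ of $\Psi$ whose square roots $\sigma_i^\star=\sqrt{t_i^\star}$ satisfy (a) and (b).

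\textbf{Step 2: ordering via symmetrization and exchange.} Because the law of $\xi$ is exchangeable, $\Psi$ is invariant under permuting indices within any block of equal $\sigma_i$; averaging an arbitrary minimizer over this finite group yields, by convexity, a minimizer $t^\star$ constant on each such block, so $\sigma_i=\sigma_j\Rightarrow\sigma_i^\star=\sigma_j^\star$. For strict ordering, swapping coordinates $i$ and $j$ leaves the expectation term unchanged (exchangeability again) and changes the linear term by $(\sigma_i-\sigma_j)(\sqrt{t^\star_i}-\sqrt{t^\star_j})$, which optimality forces to be $\ge0$; hence $\sigma_i>\sigma_j\Rightarrow t^\star_i\ge t^\star_j$. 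Together these give (a). One also records that $\sigma_i>0$ forces $t^\star_i>0$ (the derivative of $-\sigma_i\sqrt{t_i}$ is $-\infty$ at $0$) and $\sigma_i=0$ forces $t^\star_i=0$ ($\Phi$ strictly increasing, $S=\sum_l t_l\xi_l^2$ strictly increasing in $t_i$); this settles the cases of \eqref{eq:homog} in which some $\sigma_i$ or $\sigma^\star_i$ vanishes, using the stated conventions $\tfrac00=1$, $\tfrac10=\infty$.

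\textbf{Step 3: homogenization via a correlation inequality.} For $\sigma_i\ge\sigma_j>0$ we have $t^\star_i,t^\star_j>0$, so stationarity reads $\tfrac{\sigma_i}{2\sqrt{t^\star_i}}=\E_\xi[\Phi'(S)\xi_i^2]$ and likewise for $j$ (differentiation under the expectation is legitimate: $\Phi$ is locally Lipschitz and, whenever $G\ne0$, $S$ has a continuous law, so $\Phi$ is a.s.\ differentiable at $S$). Inequality~\eqref{eq:homog} is then equivalent to $\sigma_i^\star/\sigma_i\le\sigma_j^\star/\sigma_j$, i.e.\ to $\E_\xi[\Phi'(S)(\xi_i^2-\xi_j^2)]\ge0$. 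To prove the latter I use the transposition $\tau$ swapping the $i$- and $j$-coordinates of $\xi$: it preserves the law of $\xi$, negates $\xi_i^2-\xi_j^2$, and changes $S$ into $\widetilde S$ with $S-\widetilde S=(t^\star_i-t^\star_j)(\xi_i^2-\xi_j^2)$. Folding the contribution of $\{\xi_i^2<\xi_j^2\}$ onto $\{\xi_i^2>\xi_j^2\}$ through $\tau$ rewrites the expectation as $\E_\xi\big[(\Phi'(S)-\Phi'(\widetilde S))(\xi_i^2-\xi_j^2)\,\mathbf 1\{\xi_i^2>\xi_j^2\}\big]$; on that event $\xi_i^2-\xi_j^2>0$ and $S\ge\widetilde S$ by part (a), so $\Phi'(S)\ge\Phi'(\widetilde S)$ since $\Phi'$ is nondecreasing — exactly the content of Assumption~\ref{ass:homo}. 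The integrand is nonnegative, and translating back through $\sigma^\star_i=\sqrt{t^\star_i}$ yields \eqref{eq:homog}.

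\textbf{Main obstacle.} Steps 1 and 2 are essentially bookkeeping. The crux is Step 3: proving the correlation inequality $\E_\xi[\Phi'(S)(\xi_i^2-\xi_j^2)]\ge0$ — a monotone coupling between the ``promise'' $\sigma_i$ of a coordinate and the marginal curvature $\E[\Phi'(S)\xi_i^2]$ it feels — and making the first-order conditions rigorous in the presence of the non-smoothness of $H$ and the boundary constraints $t\ge0$. The swap symmetry of the uniform measure on the sphere, the monotonicity of $\Phi'$ coming from Assumption~\ref{ass:homo}, and the ordering from part (a) are precisely what make the inequality close.
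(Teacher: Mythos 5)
Your proposal is correct and follows essentially the same route as the paper: reduce to a scalar program in the singular values via Proposition~\ref{prop:orth_spaces}, write the first-order conditions with $q(x)=H(\sqrt{x})$, and close part (b) by the coordinate-swap symmetry of the sphere combined with the monotonicity of $\partial q$ and the ordering from part (a). The only (harmless, arguably cleaner) deviations are your reparametrization $t_i=\tilde\sigma_i^2$, which makes convexity of the scalarized program manifest, and your derivation of part (a) by an exchange argument on the linear term rather than from the equality case of von Neumann's inequality.
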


\begin{remark}
In fact, Part (a) follows from the proof of Proposition~\ref{prop:orth_spaces} in the generic case. However, we prefer to state it along with Part (b). Conventionally, singular values in an SVD are ordered by default (e.g., in descending order). Here, we do not impose such an ordering on the SVDs of $Q^\star$ and $G$; otherwise, Part (a) would be trivially implied.
\end{remark}

The second statement says that the singular values of the solution matrix $Q^\star$ are more uniform than those of the original gradient $G$. In this sense, the isotropic curvature model suggests that the optimal update direction should have singular values that are more homogeneous than those of the original gradient. We call this property, revealed by the theorem, \textit{spectrum homogenization}. Spectrum homogenization is scale-invariant; thus, this theorem does not provide information about the magnitudes of the singular values or specify a learning rate, which is often determined empirically.

Theorem \ref{thm:homo} implies that the singular values should ideally maintain the same ordering as those of the original gradient matrix. To this end, the update direction matrix should have its singular values obtained from those of the original gradient via a monotone transformation. However, this is typically not the case when implementing matrix-gradient methods with polynomial approximations, such as the Newton--Schultz iteration \cite{jordan2024muon}, which are generally not monotone. This suggests a potential direction for future research: developing efficient methods to solve the optimization problem in \eqref{eq:our_model} while preserving the ordering of singular values.

For illustration, consider $H(r) = c r^4$ for some $c > 0$, which satisfies Assumption~\ref{ass:homo}. The optimization program of the isotropic curvature model is equivalent to
\[
\min_{\tilde\sigma_1, \ldots, \tilde\sigma_k} -\sum_{i=1}^k \sigma_i \tilde\sigma_i + \frac{c}{n(n+2)} \left[ \left( \sum_{i=1}^k \tilde\sigma_i^2 \right)^2 + 2\sum_{i=1}^k \tilde\sigma_i^4 \right],
\]
where $k = \min\{m,n\}$. The optimal solution $(\sigma^\star_1, \ldots, \sigma^\star_k)$ must satisfy
\begin{equation}\label{eq:der_zero}
(\sigma_j^\star)^3 + D \sigma_j^\star - \frac{n(n+2)\sigma_j}{8c} = 0
\end{equation}
for all $j$, where $D = \frac{1}{2}\sum_{i=1}^k (\sigma_i^\star)^2$. It is straightforward to see that this system of cubic equations has a unique real root. Each component of this root is nonnegative, and it is positive if the corresponding $\sigma_j > 0$. It is also clear that if $\sigma_i > \sigma_j$, then we must have $\sigma^\star_i > \sigma^\star_j$. To see the second statement of Theorem~\ref{thm:homo}, assume $\sigma_i > \sigma_j > 0$. From \eqref{eq:der_zero}, it follows that
\[
\frac{(\sigma^\star_i)^3 + D \sigma^\star_i}{(\sigma^\star_j)^3 + D \sigma^\star_j} = \frac{\sigma_i}{\sigma_j}.
\]
Thus,
\[
\frac{\sigma^\star_i}{\sigma^\star_j} = \frac{(\sigma^\star_j)^2 + D}{(\sigma^\star_i)^2 + D} \cdot \frac{\sigma_i}{\sigma_j} < \frac{\sigma_i}{\sigma_j}.
\]

In contrast, if $H(r) = c r^2$, the solution $Q^\star$ to the isotropic curvature model is simply proportional to $G$.

\subsection{Orthogonalization}
\label{sec:orthogonalization}

Intuitively, the most extreme form of spectrum homogenization is orthogonalization, which makes all singular values equal. To study when this occurs, we next consider a scenario where the curvature function $H$ increases very rapidly. This means its derivative $H'(r)$ increases sharply from a small value to a large value over a short range. In the limit, we consider this range to be zero, creating a jump discontinuity. We assume there is a \textit{kink} at some radius $\tilde r$, such that $H'(\tilde r-)$ is small while $H'(\tilde r+)$ is large. Our next result shows that under this extreme growth condition, the optimal solution to the isotropic curvature model is orthogonalization.

To formally state our result, we consider the following assumption.

\begin{assumption}\label{ass:orth}
The curvature function $H$ is a non-decreasing convex function such that there exists $\tilde r > 0$ where the left-derivative is $H'(\tilde r-) = A$ and the right-derivative is $H'(\tilde r+) = B$ for some $0 \le A < B < \infty$.
\end{assumption}

This assumption states that the subdifferential of $H$ at $\tilde r$ is the interval $[A, B]$. Throughout this subsection, we consider $m \ge n$, meaning $Q$ has at least as many rows as columns. We assume that the gradient matrix $G$ is of full rank; that is, $\sigma_{\min\{m,n\}} = \sigma_n > 0$.

\begin{theorem}\label{thm:orth}
Let $H$ satisfy Assumption~\ref{ass:orth} with a sufficiently small $A$ and a sufficiently large $B$. Then there exists an optimal solution $Q^\star$ to the isotropic curvature model that takes the form $Q^\star = c \cdot \msgn(G) \equiv c U V^\top$ for some scalar $c > 0$, where $U$ and $V$ are from the SVD of $G$.
\end{theorem}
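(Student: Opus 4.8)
The plan is to reduce the model to its spectrum, identify $\tilde r\cdot\msgn(G)$ as the candidate optimum, and certify its global optimality by a single subgradient inequality whose slack is killed by a moment-matching selection. Since $B$ is large, convexity of $H$ gives $H(r)\ge H(\tilde r)+B(r-\tilde r)$ for $r\ge \tilde r$, so $H$ grows at least linearly with a large slope; this guarantees a finite optimum, so Proposition~\ref{prop:orth_spaces} applies and we may restrict to $Q=U\Sigma^\star V^\top$ with $\Sigma^\star=\mathrm{diag}(\tilde\sigma_1,\dots,\tilde\sigma_n)$, $\tilde\sigma_j\ge 0$, where $G=U\Sigma V^\top$ is the SVD; since $m\ge n$, $V$ is orthogonal and $U$ has orthonormal columns, so $\|Q\zeta\|=\|\Sigma^\star V^\top\zeta\|$, and as $V^\top\zeta$ is again uniform on the sphere the model becomes
\[
\min_{\tilde\sigma\ge 0}\ \Phi(\tilde\sigma),\qquad \Phi(\tilde\sigma)=-\sum_{j=1}^n\sigma_j\tilde\sigma_j+\E_{\zeta\sim\text{sphere}}H\!\left(\sqrt{\textstyle\sum_{j=1}^n\tilde\sigma_j^2\zeta_j^2}\right).
\]
The orthogonalized gradient corresponds to $\tilde\sigma=\tilde r\,\mathbf 1$: then $\|Q\zeta\|\equiv\tilde r$ sits exactly at the kink of $H$, and $\Phi(\tilde r\mathbf 1)=H(\tilde r)-\tilde r\,\|G\|_*$ where $\|G\|_*=\sum_j\sigma_j$.

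For the comparison, fix any $\tilde\sigma\ge 0$ and any measurable selection $\zeta\mapsto s(\zeta)\in\partial H(\tilde r)=[A,B]$. Convexity of $H$ gives $H(\|Q\zeta\|)\ge H(\tilde r)+s(\zeta)\bigl(\|Q\zeta\|-\tilde r\bigr)$, while Cauchy--Schwarz on the unit sphere gives $\|Q\zeta\|=\|\Sigma^\star\zeta\|\ge\langle\Sigma^\star\zeta,\zeta\rangle=\sum_j\tilde\sigma_j\zeta_j^2\ge 0$. Taking expectations and using $s(\zeta)\ge 0$,
\[
\Phi(\tilde\sigma)\ \ge\ -\sum_j\sigma_j\tilde\sigma_j+H(\tilde r)+\sum_j\tilde\sigma_j\,\E\!\bigl[s(\zeta)\zeta_j^2\bigr]-\tilde r\,\E\!\bigl[s(\zeta)\bigr].
\]
If the selection can be chosen so that $\E[s(\zeta)\zeta_j^2]=\sigma_j$ for every $j$, then $\E[s(\zeta)]=\sum_j\E[s(\zeta)\zeta_j^2]=\|G\|_*$, the linear terms cancel, and $\Phi(\tilde\sigma)\ge H(\tilde r)-\tilde r\,\|G\|_*=\Phi(\tilde r\mathbf 1)$. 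Hence $\tilde r\mathbf 1$ minimizes $\Phi$ over $\tilde\sigma\ge 0$; since $\{U\,\mathrm{diag}(\tilde\sigma)V^\top:\tilde\sigma\ge 0\}$ lies in the feasible set and Proposition~\ref{prop:orth_spaces} places a global optimum inside it, $Q^\star=\tilde r\,UV^\top=\tilde r\,\msgn(G)$ is a global optimum (so in fact $c=\tilde r$).

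Everything therefore hinges on the following realizability claim, which I expect to be the main obstacle: there is a measurable $s:S^{n-1}\to[A,B]$ with $\E_\zeta[s(\zeta)\zeta_j^2]=\sigma_j$ for all $j$. Two necessary conditions are immediate, $A/n=A\,\E[\zeta_j^2]\le\sigma_j\le B\,\E[\zeta_j^2]=B/n$; since $G$ has full rank every $\sigma_j$ is a fixed positive number, so both hold once $A$ is small and $B$ is large. For sufficiency I would argue constructively: take $s(\zeta)=A+\sum_{j=1}^n t_j\,\mathbf 1_{C_j}(\zeta)$ with thin disjoint polar caps $C_j=\{\zeta:\zeta_j^2>1-\rho\}$, $\rho<1/2$, and coefficients $t_j\ge 0$. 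On $C_j$ one has $\zeta_i^2<\rho$ for $i\ne j$ while $\zeta_j^2>1-\rho$, so the linear system $\sum_j t_j\,\E[\zeta_i^2\mathbf 1_{C_j}]=\sigma_i-A/n$ has a strictly diagonally dominant, hence invertible, coefficient matrix for small $\rho$, and its solution has strictly positive entries once $\rho$ and $A$ are small (because $\sigma_i>A/n$); then $s$ takes values in $\{A\}\cup\{A+t_j\}_j$, so $s\le A+\max_j t_j\le B$ for $B$ large. The difficulty is concentrated here (equivalently, in describing the image of the linear map $s\mapsto(\E[s(\zeta)\zeta_j^2])_j$ over selections valued in $[A,B]$), and I would also record along the way that full rank is genuinely used: if some $\sigma_j=0$ then $\E[s(\zeta)\zeta_j^2]=0$ forces $s\equiv 0$ a.e.\ on $\{\zeta_j\ne 0\}$, incompatible with $s\ge A>0$. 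Once the realizability lemma is in hand, the reduction and the comparison inequality above are essentially soft.
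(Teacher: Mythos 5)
Your proposal is correct and parallels the paper's proof at the structural level: both reduce the theorem to the existence of a measurable selection $s(\zeta)\in[A,B]$ with $\E[s(\zeta)\zeta_j^2]=\sigma_j$ for all $j$, which the paper isolates as Lemma~\ref{lm:eta_exist}. You differ in two respects, both worth noting. First, instead of invoking first-order optimality conditions for a nonsmooth convex program and computing the matrix subdifferential of $Q\mapsto H(\|Q\zeta\|)$ via the chain rule (as the paper does), you write down an explicit global-optimality certificate: the scalar subgradient inequality $H(r)\ge H(\tilde r)+s(\zeta)(r-\tilde r)$ combined with Cauchy--Schwarz $\|\Sigma^\star\zeta\|\ge\sum_j\tilde\sigma_j\zeta_j^2$, with both inequalities tight at $\tilde\sigma=\tilde r\mathbf 1$. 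This is more elementary and avoids matrix subdifferential calculus, which is a genuine simplification. Second, you propose to prove the realizability lemma constructively with polar caps, whereas the paper proves it nonconstructively by a cone-separation (separating hyperplane) argument showing the achievable set fills the positive orthant.

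There is one step that needs tightening. You write that the linear system $\sum_j t_j\,\E[\zeta_i^2\mathbf 1_{C_j}]=\sigma_i-A/n$ has a strictly diagonally dominant coefficient matrix and therefore a strictly positive solution. Diagonal dominance gives invertibility, but with all entries of $M$ positive (not an M-matrix), positivity of $M^{-1}b$ does not follow from positivity of $b$ alone. To close this, use the symmetry of the caps: $M=(m_d-m_o)I+m_o\mathbf 1\mathbf 1^\top$ with $m_d=\E[\zeta_j^2\mathbf 1_{C_j}]$ and $m_o=\E[\zeta_i^2\mathbf 1_{C_j}]$ ($i\ne j$), so by Sherman--Morrison
\[
t_i=\frac{1}{m_d-m_o}\Bigl(b_i-\frac{m_o}{m_d+(n-1)m_o}\sum_j b_j\Bigr),
\]
and since $m_o/m_d<\rho/(1-\rho)\to 0$ as $\rho\to 0$, the subtracted term can be made smaller than $\min_i b_i$ by choosing $\rho$ small; then all $t_i>0$ and $B\ge A+\max_i t_i$ suffices. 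With that fix your constructive route to the lemma is sound, and the rest of your argument (reduction to $\tilde\sigma\ge 0$ via Proposition~\ref{prop:orth_spaces}, the cancellation $\E[s(\zeta)]=\sum_j\E[s(\zeta)\zeta_j^2]=\|G\|_*$, and the observation that full rank is needed so $\sigma_j>A/n$) is correct.
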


This theorem provides a justification for the orthogonalization in Muon and its extensions like PolarGrad~\cite{lau2025polargrad}, which can be understood as operating under an implicit assumption that the curvature function grows slowly at first and then suddenly ``takes off.'' This theorem can be viewed as a limiting counterpart to Theorem~\ref{thm:homo}, as orthogonalization is the most extreme form of homogenization.

The intuition behind this theorem can be gleaned from its proof in Section~\ref{sec:proof}. The first-order optimality condition requires that $G$ be an element of the subdifferential of the term $\E_{\zeta \sim \text{sphere}} H(\|Q \zeta\|)$. As revealed by the proof, the scalar $c$ in Theorem~\ref{thm:orth} is $\tilde r$. When $r < \tilde r$, the derivative $H'(r)$ is small, and when $r > \tilde r$, it is large. The first-order condition can only be satisfied if the wide interval of the subdifferential at $\tilde r$ is leveraged. The stationary point must therefore occur for a $Q^\star$ such that $\|Q^\star \zeta\| = \tilde r$ almost surely for $\zeta$ on the unit sphere, which implies that $Q^\star$ must have scaled orthonormal columns. This is why Theorem~\ref{thm:orth} (and Proposition~\ref{prop:orth_converse} below) requires the technical condition $m \ge n$. However, as we will remark in the proof in Section~\ref{sec:proof}, this condition could be dropped, albeit at the cost of more complicated mathematical statements.

The kink condition in Assumption~\ref{ass:orth} is also necessary for gradient orthogonalization, as shown in the next result.

\begin{proposition}\label{prop:orth_converse}
Let $H$ satisfy Assumption~\ref{ass:homo} and let $G$ not be a (scaled) orthonormal matrix; that is, its singular values are not all equal. If the isotropic curvature model has a global solution that takes the form $Q^\star = c \cdot \msgn(G)$ for some scalar $c > 0$, then $H$ must have a kink as described in Assumption~\ref{ass:orth}.
\end{proposition}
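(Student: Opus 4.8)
The plan is to read off the first-order optimality condition of the convex program \eqref{eq:our_model} at $Q^\star$ and then exploit the rigidity forced by $Q^\star$ being a scaled orthonormal matrix. Write $G = U\Sigma V^\top$ for the compact SVD; under the standing assumptions of this subsection ($m \ge n$ and $G$ of full rank), $U \in \R^{m \times n}$ has orthonormal columns, $V \in \R^{n \times n}$ is orthogonal, and $\Sigma = \mathrm{diag}(\sigma_1, \dots, \sigma_n)$ with all $\sigma_i > 0$. Then $Q^\star = cUV^\top$, and since $VV^\top = I_n$ we get the key collapse $\|Q^\star \zeta\| = c$ for \emph{every} $\zeta$ on the unit sphere $S^{n-1}$. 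Writing $\Psi(Q) = \E_{\zeta \sim \mathrm{sphere}} H(\|Q\zeta\|)$, which is finite and convex since $H$ is finite and convex on $[0,\infty)$, the objective is $\Phi(Q) = -\Tr(QG^\top) + \Psi(Q)$, and because $Q^\star$ globally minimizes $\Phi$ we have $0 \in \partial\Phi(Q^\star) = -G + \partial\Psi(Q^\star)$, i.e. $G \in \partial\Psi(Q^\star)$.

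Next I would argue by contradiction: suppose $H$ has \emph{no} kink at $c$, equivalently that $H$ is differentiable at $c$ (a finite convex function on $(0,\infty)$ fails to be differentiable at an interior point exactly when its two one-sided derivatives there differ). For each fixed $\zeta$, the map $Q \mapsto \|Q\zeta\|$ is differentiable at $Q^\star$ because $Q^\star\zeta \ne 0$, with gradient $Q^\star\zeta\zeta^\top / \|Q^\star\zeta\| = UV^\top \zeta\zeta^\top$; composing with $H$, differentiable at $c$, shows $Q \mapsto H(\|Q\zeta\|)$ is differentiable at $Q^\star$ with gradient $H'(c)\, UV^\top \zeta\zeta^\top$. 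Since $\|Q^\star\zeta\| = c$ holds for all $\zeta$ at once and $H$ is convex (hence locally Lipschitz near $c$ with a constant not depending on $\zeta$), the functions $Q \mapsto H(\|Q\zeta\|)$ are uniformly Lipschitz on a fixed ball around $Q^\star$, so dominated convergence lets us differentiate under the expectation: $\Psi$ is differentiable at $Q^\star$ with $\nabla\Psi(Q^\star) = H'(c)\, UV^\top\, \E_\zeta[\zeta\zeta^\top] = \frac{H'(c)}{n} UV^\top$, using $\E_\zeta[\zeta\zeta^\top] = \frac1n I_n$ on $S^{n-1}$. Feeding this into $G = \nabla\Psi(Q^\star)$ gives $U\Sigma V^\top = \frac{H'(c)}{n} UV^\top$; multiplying on the left by $U^\top$ and on the right by $V$ yields $\Sigma = \frac{H'(c)}{n} I_n$, so all singular values of $G$ are equal, contradicting the hypothesis. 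Hence $H$ is not differentiable at $c$.

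To finish I would set $\tilde r = c > 0$, $A = H'(\tilde r-)$, $B = H'(\tilde r+)$: convexity gives $A < B$ (the kink), $\tilde r$ being interior to the domain gives $A, B < \infty$, and monotonicity of $H$ from Assumption~\ref{ass:homo} gives $A \ge 0$; together with $H$ being non-decreasing and convex, these are exactly the requirements of Assumption~\ref{ass:orth}. The one genuinely delicate step is the interchange of differentiation and expectation — establishing that $\Psi$ is \emph{differentiable} (not merely subdifferentiable) at $Q^\star$ under the contradiction hypothesis. This is where $Q^\star = c\,\msgn(G)$ is used crucially: the radii $\|Q^\star\zeta\|$ all collapse to the single value $c$, so we only ever probe $H$ in a neighborhood of $c$, where it is smooth and uniformly Lipschitz. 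For a general $Q^\star$ the radii would spread over an interval and one would instead have to work with the set description $\partial\Psi(Q^\star) = UV^\top \{ \E_\zeta[s(\zeta)\zeta\zeta^\top] : s(\zeta) \in [H'(c-), H'(c+)] \}$ and show that $G \in \partial\Psi(Q^\star)$ forces $\E_\zeta[s(\zeta)\zeta\zeta^\top] = V\Sigma V^\top$, which is impossible once the absence of a kink collapses that set to the single matrix $\frac{H'(c)}{n}UV^\top$.
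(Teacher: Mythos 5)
Your proof is correct and follows essentially the same route as the paper's: write the first-order optimality condition $G \in \partial\Psi(Q^\star)$, exploit the collapse $\|Q^\star\zeta\| = c$ for all $\zeta$, and show that differentiability of $H$ at $c$ would force $\Sigma = \frac{H'(c)}{n}I$, contradicting the hypothesis. You add welcome rigor that the paper elides --- the uniform-Lipschitz/dominated-convergence justification for differentiating under the expectation, and the explicit check that $A = H'(c-)$, $B = H'(c+)$ satisfy all the conditions of Assumption~\ref{ass:orth}.
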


However, this interpretation of Theorem~\ref{thm:orth} and Proposition~\ref{prop:orth_converse} requires caution. While our numerical results suggest that $H$ grows quickly, it is unlikely to exhibit the extreme case of a derivative jumping suddenly from a small value to a large one. In light of this, the gradient orthogonalization in Muon could be interpreted as a reasonable approach from an extreme-case perspective.

In short, our analysis suggests that the optimal strategy is generally not full orthogonalization, but rather homogenization to some degree. This is consistent with numerical experiments showing that rough and exact orthogonalization yield similar performance in pretraining large language models~\cite{kimi2025k2}. Determining the optimal level of homogenization presents an opportunity for future research.

\section{Proofs}
\label{sec:proof}

\subsection{Proofs for Sections~\ref{sec:sing-space-unch} and~\ref{sec:sh}}
\label{sec:proofs-sect-refs}

The proof of Proposition~\ref{prop:orth_spaces} immediately follows from von Neumann's trace inequality.

\begin{lemma}[von Neumann's trace inequality]\label{lemma:orth}
Let $A = U_1\Sigma_1 V_1^\top$ and $B = U_2\Sigma_2 V_2^\top$ be the SVDs of two matrices of the same size. Then
\[
|\Tr(AB^\top)| \le \Tr(\Sigma_1 \Sigma_2^\top).
\]
\end{lemma}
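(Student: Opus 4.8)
The plan is to strip off the orthogonal factors, rewrite $\Tr(AB^\top)$ as a linear functional of a ``substochastic'' matrix, and then finish with the Birkhoff--von Neumann theorem and the rearrangement inequality --- this is the standard route for von Neumann's trace inequality. Throughout I take the singular values in $\Sigma_1$ and $\Sigma_2$ to be listed in the conventional descending order, so that $\Tr(\Sigma_1\Sigma_2^\top) = \sum_{k=1}^{\min\{m,n\}} \sigma_k(A)\sigma_k(B)$. First I would substitute the two SVDs and use cyclic invariance of the trace to write $\Tr(AB^\top) = \Tr(X\,\Sigma_1\,Y\,\Sigma_2^\top)$, where $X = U_2^\top U_1$ and $Y = V_1^\top V_2$ are orthogonal. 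Expanding this product in coordinates and using that $\Sigma_1$ and $\Sigma_2$ are rectangular diagonal gives $\Tr(AB^\top) = \sum_{i,j=1}^{k} \sigma_i(B)\,\sigma_j(A)\,X_{ij}Y_{ji}$ with $k = \min\{m,n\}$.

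Next I would set $c_{ij} = X_{ij}Y_{ji}$ and observe that the nonnegative $k\times k$ matrix $(|c_{ij}|)$ has all row sums and all column sums at most one: for fixed $j$, Cauchy--Schwarz gives $\sum_i |c_{ij}| \le (\sum_i X_{ij}^2)^{1/2}(\sum_i Y_{ji}^2)^{1/2} \le 1$ since column $j$ of $X$ and row $j$ of $Y$ are unit vectors, and symmetrically for fixed $i$. Because $\sigma_i(B),\sigma_j(A)\ge 0$, it follows that $|\Tr(AB^\top)| \le \sum_{i,j}\sigma_i(B)\sigma_j(A)|c_{ij}|$ is at most the maximum of the linear functional $D \mapsto \sum_{i,j}\sigma_i(B)\sigma_j(A)D_{ij}$ over the compact convex set of $k\times k$ matrices with nonnegative entries and row/column sums at most one. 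Since the coefficients are nonnegative, this maximum is attained at a doubly stochastic matrix (one can only increase the objective by adding mass until every row and column sums to one), hence, by Birkhoff--von Neumann, at a permutation matrix $P_\pi$. The value $\sum_i \sigma_i(B)\sigma_{\pi(i)}(A)$ is maximized, by the rearrangement inequality and the descending ordering, at $\pi=\mathrm{id}$, which yields $\sum_k \sigma_k(A)\sigma_k(B) = \Tr(\Sigma_1\Sigma_2^\top)$.

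It remains to handle non-square $A$ and $B$; the cleanest way is to pad with zero rows (if $m<n$) or zero columns (if $m>n$) to reach the square case, which changes neither $\Tr(AB^\top)$ nor the list of singular values, although one can equally check that the coordinate computation and the substochasticity estimate already hold verbatim for rectangular matrices. I expect the only delicate points to be organizational rather than mathematical: keeping the index bookkeeping correct in the coordinate expansion for rectangular $\Sigma_1,\Sigma_2$, and being precise that Step~2 produces a doubly \emph{sub}stochastic matrix, so that passing to a permutation matrix genuinely requires both Birkhoff--von Neumann and the nonnegativity of the coefficients. Once the unitary factors are removed, there is no analytic obstacle and the argument is entirely combinatorial.
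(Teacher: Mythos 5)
Your proof is correct, and it is the standard Birkhoff--von Neumann argument for von Neumann's trace inequality. Note, however, that the paper does not actually prove this lemma; it is stated as a classical result and then used as a black box in the proof of Proposition~\ref{prop:orth_spaces}, so there is no in-paper argument for you to compare against. Your write-up is sound on its own terms: the cyclic rewriting $\Tr(AB^\top)=\sum_{i,j=1}^{k}\sigma_i(B)\sigma_j(A)X_{ij}Y_{ji}$ with $X=U_2^\top U_1$, $Y=V_1^\top V_2$ orthogonal and $k=\min\{m,n\}$ is correct; the Cauchy--Schwarz step rightly shows that the $k\times k$ matrix $\bigl(|X_{ij}Y_{ji}|\bigr)$ is doubly \emph{sub}stochastic (the truncated sums $\sum_{i\le k}X_{ij}^2$ and $\sum_{i\le k}Y_{ji}^2$ are only $\le 1$, which matters precisely when $m\ne n$); and you correctly flag that passing from the substochastic polytope to permutation matrices relies on both Birkhoff--von Neumann and the nonnegativity of the coefficients $\sigma_i(B)\sigma_j(A)$. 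One small remark: the padding-to-square step at the end is unnecessary, since, as you already suspect, the coordinate expansion and the substochasticity estimate hold verbatim for rectangular matrices because $\Sigma_1$ and $\Sigma_2$ force all surviving indices into $\{1,\dots,k\}$.
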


Note that we assume the default SVD where the singular values in both $\Sigma_1$ and $\Sigma_2$ are in decreasing order.

\begin{proof}[Proof of Proposition~\ref{prop:orth_spaces}]

To see how Proposition~\ref{prop:orth_spaces} follows, note that the second term in the objective, $\E_{\zeta \sim \textnormal{sphere}} H(\|Q \zeta\|)$, does not change if one varies the singular spaces of $Q$, as long as the spectrum of $Q$ is fixed.

Let $Q^\star$ be any global solution to the isotropic curvature model and write $Q^\star = U_Q \Sigma^\star V^\top_Q$ for its SVD. Let $G = U \Sigma V^\top$ be the SVD of the gradient matrix. By the optimality of $Q^\star$, we know that $Q^\star$ maximizes $\Tr(Q G^\top)$ while fixing its singular values, $\Sigma^\star$. By Lemma~\ref{lemma:orth},
\[
\Tr(Q^\star G^\top) \le \Tr(\Sigma^\star \Sigma^\top),
\]
and equality holds if $U_Q = U$ and $V_Q = V$. This implies that the matrix $\tilde{Q} = U \Sigma^\star V^\top$, which has the same singular values as $Q^\star$, yields an objective value less than or equal to that of $Q^\star$. Thus, $U \Sigma^\star V^\top$ must also be a global solution, and it has singular spaces aligned with those of $G$.

\end{proof}

\begin{remark}
The equality in von Neumann's trace inequality holds if and only if the singular spaces are exactly aligned in the generic case where all singular values are distinct. In this case, the proof of Proposition~\ref{prop:orth_spaces} reveals that the singular values of $G$ and $Q^\star$ have the same ordering.

\end{remark}

Next, we prove Theorem~\ref{thm:homo}.

\begin{proof}[Proof of Theorem~\ref{thm:homo}]

As noted in the proof of Proposition~\ref{prop:orth_spaces}, Part (a) follows from aligning the singular spaces in the generic case. Now we prove Part (b).

For simplicity, write $q(x) = H(\sqrt{x})$, which is convex by assumption. The optimization problem becomes
\[
\min_Q -\Tr(Q G^\top) + \E_{\zeta} q(\|Q \zeta\|^2),
\]
where we omit $\sim \text{sphere}$ in the subscript for brevity.

From Proposition~\ref{prop:orth_spaces}, we can assume without loss of generality that both $G$ and the optimal solution $Q^\star$ are diagonal. Let $Q = \mathrm{diag}(\tilde\sigma_1, \ldots, \tilde\sigma_k)$, where $k = \min\{m,n\}$. Then the program becomes
\[
\min_{\tilde\sigma_1, \ldots, \tilde\sigma_k} -\sum_{i=1}^k \sigma_i \tilde\sigma_i + \E_{\zeta} q(\tilde\sigma_1^2\zeta_1^2 + \cdots + \tilde\sigma_k^2 \zeta_k^2).
\]
If $\sigma_i = 0$ for some $i$, then since $q$ is strictly increasing, the optimal value $\sigma_i^\star$ must be 0 as well. This immediately shows that \eqref{eq:homog} holds whenever one of $\sigma_i$ and $\sigma_j$ is 0.

Now we consider the non-degenerate case where $\sigma_i\sigma_j \ne 0$. Let $(\sigma_1^\star, \ldots, \sigma_k^\star)$ be a minimizer, which must exist since $q$ is convex and strictly increasing. Then the first-order optimality conditions are
\[
2\E_{\zeta} \left[ \partial q\left(\sum_{l=1}^k (\sigma_l^\star)^2\zeta_l^2\right)\zeta_i^2 \sigma_i^\star \right] = \sigma_i
\]
for each $i=1, \ldots, k$. Comparing the conditions for indices $i$ and $j$:
\begin{equation}\label{eq:twosig}
2\E_{\zeta} \left[ \partial q\left(\sum_{l=1}^k (\sigma_l^\star)^2\zeta_l^2\right)\zeta_i^2 \sigma_i^\star \right] = \sigma_i, \quad 2\E_{\zeta} \left[ \partial q\left(\sum_{l=1}^k (\sigma_l^\star)^2\zeta_l^2\right)\zeta_j^2 \sigma_j^\star \right] = \sigma_j.
\end{equation}

Without loss of generality, assume $\sigma_j > \sigma_i$. By Part (a), this implies $\sigma_j^\star \ge \sigma_i^\star$. However, strict inequality must hold ($\sigma_j^\star > \sigma_i^\star$), because if $\sigma_j^\star = \sigma_i^\star$, the first-order conditions \eqref{eq:twosig} would imply $\sigma_j = \sigma_i$, which contradicts our assumption. To prove homogenization, we need to show $\sigma_j^\star / \sigma_i^\star \le \sigma_j / \sigma_i$, which is equivalent to $\sigma_j^\star \sigma_i - \sigma_i^\star \sigma_j \le 0$.
We examine the sign of this difference:
\[
\begin{aligned}
& \sigma_j^\star \sigma_i - \sigma_i^\star \sigma_j \\
& = \sigma_j^\star \cdot 2\E_{\zeta} \left[ \partial q\left(\sum_{l=1}^k (\sigma_l^\star)^2\zeta_l^2\right)\zeta_i^2 \sigma_i^\star \right] - \sigma_i^\star \cdot 2\E_{\zeta} \left[ \partial q\left(\sum_{l=1}^k (\sigma_l^\star)^2\zeta_l^2\right)\zeta_j^2 \sigma_j^\star \right] \\
& = 2 \sigma_i^\star \sigma_j^\star \E_{\zeta} \left[ \partial q\left(\sum_{l=1}^k (\sigma_l^\star)^2 \zeta_l^2 \right)\zeta_i^2 - \partial q\left(\sum_{l=1}^k (\sigma_l^\star)^2 \zeta_l^2 \right)\zeta_j^2 \right]\\
& = 2 \sigma_i^\star \sigma_j^\star \E_{\zeta}(\zeta_i^2 - \zeta_j^2) \partial q\left(\sum_{l=1}^k (\sigma_l^\star)^2 \zeta_l^2 \right) .
\end{aligned}
\]

The proof reduces to showing that the expectation is negative. Let $S(\zeta) = \sum_{l \ne i, j} (\sigma_l^\star)^2 \zeta_l^2$. Due to symmetry between $\zeta_i$ and $\zeta_j$,
\[
\E_{\zeta} (\zeta_i^2 - \zeta_j^2) \partial q\left(S(\zeta) + (\sigma_i^\star)^2 \zeta_i^2+ (\sigma_j^\star)^2 \zeta_j^2 \right) = \E_{\zeta} (\zeta_j^2 - \zeta_i^2) \partial q\left(S(\zeta) + (\sigma_i^\star)^2 \zeta_j^2+ (\sigma_j^\star)^2 \zeta_i^2 \right)
\]
Hence, it suffices to show that the expectation below is negative:
\[
(\zeta_i^2 - \zeta_j^2) \left[ \partial q\left(S(\zeta) + (\sigma_i^\star)^2 \zeta_i^2 + (\sigma_j^\star)^2 \zeta_j^2 \right) - \partial q\left(S(\zeta) + (\sigma_i^\star)^2 \zeta_j^2 + (\sigma_j^\star)^2 \zeta_i^2 \right) \right].
\]
Let $A = S(\zeta) + (\sigma_i^\star)^2 \zeta_i^2 + (\sigma_j^\star)^2 \zeta_j^2$ and $B = S(\zeta) + (\sigma_i^\star)^2 \zeta_j^2 + (\sigma_j^\star)^2 \zeta_i^2$.
Then $B-A = ((\sigma_j^\star)^2 - (\sigma_i^\star)^2)(\zeta_i^2 - \zeta_j^2)$.
Since we assumed $\sigma_j^\star > \sigma_i^\star$, the sign of $B-A$ is the same as the sign of $\zeta_i^2 - \zeta_j^2$.
By the convexity of $q$, its derivative $\partial q$ is non-decreasing, so $\partial q(B) - \partial q(A)$ also has the same sign as $B-A$.
Thus, $(\zeta_i^2 - \zeta_j^2)(\partial q(A) - \partial q(B)) \le 0$.
The inequality is strict unless $\zeta_i^2 = \zeta_j^2$ or the derivative is constant. After taking the expectation over $\zeta$, the strict inequality holds, which completes the proof.

\end{proof}

\begin{remark}
The convexity of $H(\sqrt{x})$, which ensures super-quadratic growth, can be slightly relaxed. The theorem still holds if $H(r)$ grows slowly for small $r$, such as $H(r) = O(r^2)$, provided $H(\sqrt{x})$ is convex for sufficiently large $x$.
\end{remark}

\subsection{Proofs for Section~\ref{sec:orthogonalization}}
\label{sec:proofs-section-ref}

We first state a lemma before proving Theorem~\ref{thm:orth}. Recall that $\zeta = (\zeta_1, \ldots, \zeta_n)$ is uniformly sampled from the unit sphere in $\R^n$.
\begin{lemma}\label{lm:eta_exist}
For any positive constants $C_1, \ldots, C_n$, there exist positive constants $a < b$ and a random variable $\eta$ satisfying $a \le \eta \le b$ almost surely such that
\[
\E \eta \zeta_i^2 = C_i
\]
for all $i = 1, \ldots, n$.

\end{lemma}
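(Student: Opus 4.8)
The plan is to exhibit $\eta$ explicitly as a bounded, strictly positive function of $\zeta$, built from the indicators of the ``spike'' events $\{\zeta_i^2 > t\}$ for a threshold $t$ close to $1$. Concretely, I would look for $\eta$ of the form $\eta = \epsilon_0 + \sum_{i=1}^n \mu_i\,\mathbf{1}[\zeta_i^2 > t]$ with parameters $\epsilon_0 > 0$, $\mu_i \ge 0$, and $t \in (1/2, 1)$ still to be chosen. Any such $\eta$ automatically satisfies $a := \epsilon_0 \le \eta \le \epsilon_0 + \sum_i \mu_i =: b$, so the entire task reduces to choosing the parameters so that all $\mu_i > 0$ and $\E[\eta\,\zeta_i^2] = C_i$ for every $i$. (The case $n = 1$ is trivial since there $\zeta_1^2 \equiv 1$, so one simply takes $\eta \equiv C_1$.)

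Next I would turn the moment conditions into a solvable linear system. By the permutation symmetry of the uniform law on the sphere, $\E[\zeta_i^2\,\mathbf{1}[\zeta_i^2 > t]] =: f(t)$ and $\E[\zeta_j^2\,\mathbf{1}[\zeta_i^2 > t]] =: g(t)$ for $j \ne i$ depend only on whether the indices coincide, and $f(t) + (n-1)g(t) = p(t) := \Pr[\zeta_i^2 > t]$. Hence $\E[\eta\,\zeta_i^2] = \epsilon_0/n + (f-g)\mu_i + g\sum_j \mu_j$; imposing that this equals $C_i$ and summing over $i$ gives $p\sum_j \mu_j = \Sigma_C - \epsilon_0$ with $\Sigma_C := \sum_i C_i$, and back-substitution yields $\mu_i = (f-g)^{-1}\big(C_i - \epsilon_0/n - g(\Sigma_C - \epsilon_0)/p\big)$. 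One checks that $f(t) - g(t) > 0$ for $t > 1/2$ (on $\{\zeta_i^2 > t\}$ one has $\zeta_j^2 \le 1 - \zeta_i^2 < 1 - t < t < \zeta_i^2$), so the formula is well defined; the only thing left is to make every $\mu_i$ positive.

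This positivity is the crux, and the main ``obstacle'' is really just hitting on the right functional form for $\eta$ together with the one estimate that makes it work: on $\{\zeta_i^2 > t\}$ we have $\zeta_j^2 < 1 - t$, so $g(t) \le (1 - t)\,p(t)$, whence $g(t)/p(t) \le 1 - t \to 0$ as $t \uparrow 1$. Choosing $t \in (1/2, 1)$ with $1 - t < (\min_i C_i)/\Sigma_C$, and then $\epsilon_0 > 0$ small, forces each numerator $C_i - \epsilon_0/n - g(\Sigma_C - \epsilon_0)/p$ to be positive, hence all $\mu_i > 0$; then $a = \epsilon_0 < b$, and $\eta$ is the desired random variable. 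It is worth noting that a naive affine ansatz $\eta = \sum_j c_j\zeta_j^2 + d$ also solves a linear system but forces $c_j + d \propto (n+2)C_j - \Sigma_C$, which is negative---so $\eta \not\ge a$---as soon as some $C_j$ is much smaller than $\Sigma_C$; the spike construction sidesteps this by concentrating the mass of $\eta$ exactly where a single coordinate dominates. (A more abstract alternative: the set $K := \{(\E[\eta\zeta_i^2])_i : 0 \le \eta \le 1\}$ is convex and compact with $0$ on its boundary, and the same spike indicators show that near $0$ its interior meets every direction of the open positive orthant; one then rescales. The explicit construction is cleaner, so I would use it.)
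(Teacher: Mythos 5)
Your argument is correct and takes a genuinely different route from the paper's. The paper proves the lemma non-constructively: it considers the convex cone $K$ of all achievable vectors $\bigl(\E\eta\zeta_1^2,\ldots,\E\eta\zeta_n^2\bigr)$ as $\eta$ ranges over random variables bounded away from $0$ and $\infty$, assumes for contradiction that $K \ne \R^n_{++}$, applies the separating hyperplane theorem to produce a vector $v$ with some $v_{i_0}<0$ satisfying $v^\top w \ge 0$ for all $w\in K$, and derives a contradiction by concentrating $\eta$ on the event where $\zeta_{i_0}^2$ is close to $1$. Your spike-indicator idea is in fact the very device the paper uses only in the final contradiction step, but you promote it to the whole proof: you posit $\eta = \epsilon_0 + \sum_i \mu_i\,\mathbf{1}[\zeta_i^2>t]$, reduce the moment constraints to a linear system in the $\mu_i$ via the symmetry identities $f(t)+(n-1)g(t)=p(t)$ and $f(t)-g(t)>0$ for $t>1/2$, solve it in closed form, and use the estimate $g(t)/p(t)\le 1-t$ to show that $t$ close to $1$ and $\epsilon_0$ small force every $\mu_i>0$. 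The computations check out, and the case distinctions (trivial $n=1$, positivity of $f-g$, positivity of $p$) are all handled or immediate. What your construction buys is an explicit $\eta$ and quantitative bounds $a=\epsilon_0$, $b=\epsilon_0+\sum_i\mu_i$ expressible in terms of the $C_i$ and the chosen $t$, which is precisely what the paper's remark after Theorem~\ref{thm:orth} flags as missing from its own non-constructive argument; the cost is more bookkeeping, whereas the paper's cone-plus-separation argument is shorter and would adapt more readily if $\zeta_i^2$ were replaced by a different family of test functions. Your side remark about the failure of the affine ansatz is a nice sanity check but not needed; the parenthetical "more abstract alternative" you sketch is essentially the paper's proof.
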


\begin{proof}[Proof of Lemma~\ref{lm:eta_exist}]
Consider the set of all random variables $\eta$ that are bounded away from 0 and $\infty$. Define a linear map $\mathcal{E}$ from this set to $\mathbb{R}^n$ as 
\[
\mathcal{E}(\eta) = (\E \eta \zeta_1^2, \ldots, \E\eta \zeta_n^2).
\] 
The image of this map, denoted by $K$, is the set of all achievable expectation vectors. Clearly, $K$ is a convex cone in the positive orthant $\R^n_{++} := \{x \in \R^n: x_i > 0 \text{ for all } i\}$. If $K$ is the entire positive orthant, the proof is complete.

Now we prove by contradiction by assuming $K$ is not the entire positive orthant. Since $K$ is convex, the interior of its closure $\bar{K}$ is the same as the interior of $K$, which is a proper subset of $\mathbb{R}^n_{++}$ by assumption. Hence, the interior of $\bar{K}$ is a proper subset of $\mathbb{R}^n_{++}$. Hence, there exists a point $(C_1, \ldots, C_n) \in \mathbb{R}^n_{++}$ that is not in $\bar{K}$.

Next, we apply the separating hyperplane theorem to $\{C\}$, which is closed, convex, and compact, and $\bar{K}$, which is closed and convex. Then there must exist a vector $v$ such that 
\[
v^\top C < c, \quad v^\top w \ge c
\]
for all $w \in \bar{K}$ and some constant $c$. Since $\bar{K}$ is a cone, the hyperplane can be chosen to pass through the origin, meaning we can set $c = 0$. Thus, there exists a nonzero vector $v$ such that 
\[
v^\top w \ge 0
\]
for all $w \in K$, and meanwhile, $v^\top C = v_1 C_1 + \cdots + v_n C_n < 0$. Since $C_1, \ldots, C_n$ are all positive, there is at least one component of $v$ that is negative, say, $v_{i_0} < 0$. 

To finish the proof, let $w \in K$ take the form $w = \mathcal{E}(\eta) = (\mathbb{E} \eta \zeta^2_1, \ldots, \mathbb{E}\eta \zeta^2_n)$ for any $\eta$ that is bounded away both from 0 and $\infty$. Then we get
\[
v^\top w = v_1\mathbb{E} \eta \zeta^2_1 + \cdots + v_n\mathbb{E}\eta \zeta^2_n = \mathbb{E}\left[\eta(v_1\zeta^2_1 + \cdots + v_n \zeta^2_n)   \right] \ge 0
\]
for all $\eta$. However, this is clearly impossible as we can let $\eta$ be very large when $\zeta_{i_0}$ is close to 1 or $-1$ and otherwise set $\eta$ to be very small. This yields a contradiction, thereby proving that $K$ must be the entire positive orthant.

\end{proof}

\begin{proof}[Proof of Theorem~\ref{thm:orth}]
Since the optimization program~\eqref{eq:our_model} is convex, we only need to verify that $Q^\star = \tilde r \msgn(G)$ satisfies the first-order condition
\begin{equation}\label{eq:1st_cond}
G \in \E_{\zeta} \left[ \partial_{Q} H(\|Q^\star \zeta\|) \right].
\end{equation}
Note that for all $\zeta$ from the unit sphere, $\|Q^\star \zeta\| = \tilde r$ since $m \ge n$ and the columns of $\msgn(G)$ are orthonormal. By the chain rule for subdifferentials,
\[
\begin{aligned}
\partial_{Q} H(\|Q^\star \zeta\|) &= \partial H(\tilde r) \frac{Q^\star \zeta \zeta^\top}{\|Q^\star \zeta\|} \\
&= \partial H(\tilde r) \frac{\tilde r \msgn(G) \zeta \zeta^\top}{\tilde r} \\
&= \partial H(\tilde r) U V^\top \zeta \zeta^\top,
\end{aligned}
\]
where $\partial H(\tilde r)$ can take any value from the subdifferential $[A, B]$.

To satisfy \eqref{eq:1st_cond}, it is sufficient to prove that there exists a random variable $\eta(\zeta)$ taking values in $[A, B]$ almost surely such that
\[
\E_{\zeta} [\eta(\zeta) U V^\top \zeta \zeta^\top] = G = U\Sigma V^\top,
\]
which is equivalent to
\[
U \left( \E_{\zeta} [\eta(\zeta) (V^\top\zeta) (V^\top\zeta)^\top] \right) V^\top = U\Sigma V^\top.
\]
Since $V^\top\zeta$ has the same distribution as $\zeta$, the proof is complete if we can show there exists such an $\eta$ satisfying
\begin{equation}\nonumber%%\label{eq:1st_cond_la}
\E_{\zeta} [\eta(\zeta) \zeta \zeta^\top] = \Sigma.
\end{equation}
The off-diagonal terms are zero by symmetry. For the diagonal terms, we need $\E[\eta(\zeta) \zeta_i^2] = \sigma_i$ for all $i$. For a sufficiently small $A$ and large $B$, the convex set of achievable vectors $\{(\E \eta \zeta_1^2, \ldots, \E \eta \zeta_n^2) : A \le \eta \le B\}$ contains the vector $(\sigma_1, \ldots, \sigma_n)$, a consequence of Lemma~\ref{lm:eta_exist}. This completes the proof.

\end{proof}

\begin{remark}

The practical significance lies in the finiteness of $B$, which roughly translates to the fact that gradient orthogonalization is optimal without requiring an infinite subdifferential. However, the proof does not specify how large $A$ and $B$ need to be for the kink to satisfy the first-order condition since it is not constructive. Intuitively, it depends on the dimensions and the range of the spectrum of $G$. It is of practical interest to obtain sharp bounds on $A$ and $B$ for Theorem~\ref{thm:orth}.

\end{remark}

Next, we turn to the proof of Proposition~\ref{prop:orth_converse}.

\begin{proof}[Proof of Proposition~\ref{prop:orth_converse}]
Since $Q^\star = c \cdot \msgn(G)$ is an optimal solution to the convex program, it must satisfy the first-order condition
\begin{equation}\label{eq:1st_cond2}
G \in \E_{\zeta} \left[ \partial_{Q} H(\|Q^\star \zeta\|) \right].
\end{equation}
As in the proof of Theorem~\ref{thm:orth}, we have $\|Q^\star\zeta\| = c$ for all $\zeta$. The condition becomes
\[
G \in \E_{\zeta} \left[ \partial H(c) U V^\top \zeta \zeta^\top \right].
\]
If $H$ is differentiable at $c$, then the subdifferential $\partial H(c)$ is the singleton $\{H'(c)\}$, and we get
\[
G = H'(c) U V^\top \E_{\zeta} [\zeta \zeta^\top].
\]
Since $\E[\zeta \zeta^\top] = \frac{1}{n} I$, this simplifies to
\[
U\Sigma V^\top = \frac{H'(c)}{n} U V^\top.
\]
This implies $\Sigma = \frac{H'(c)}{n} I$, meaning all singular values of $G$ must be equal. This contradicts the assumption that $G$ is not a scaled orthogonal matrix. Therefore, $H$ cannot be differentiable at $c$ and must have a kink.
\end{proof}

\begin{remark}
Both Theorem~\ref{thm:orth} and Proposition~\ref{prop:orth_converse} assume $m \ge n$. This condition ensures that $\|Q\zeta\|$ is a constant for all unit vectors $\zeta$ when the columns of $Q \in \R^{m \times n}$ are scaled orthonormal. The two results can be extended to the case $m < n$, but the statements might involve approximations and might not be as precise. Roughly speaking, the proof idea should continue to work by recognizing that $Q$ preserves the norm in a subspace of dimension $m$. Moreover, for sufficiently large $m$, concentration of measure ensures that the first $m$ components of $\zeta$ are approximately sampled from a sphere in $\R^m$. We leave the detailed proofs for this extension to future work.
\end{remark}

%%% Local Variables:
%%% mode: latex
%%% TeX-master: "paper"
%%% End:

\section{Discussion}
\label{sec:discussion}

In this paper, we investigate how the gradient orthogonalization used in Muon and other related methods contributes to their superior performance in training large-scale deep learning systems such as large language models. Our approach is to propose a convex program, which we call the isotropic curvature model, to understand how optimization proceeds in a single iteration. A key ingredient in the development of this model is the Taylor expansion of the per-sample loss function instead of the total objective, thereby capturing the matrix structure of the weights. Our model shows that under a super-quadratic growth condition on the high-order terms of the loss function, the optimal update matrix shares the same singular spaces as the original gradient matrix, but its singular values should be more homogeneous. This result immediately clarifies why treating weights as matrices is not only conceptually sound but also technically beneficial. In addition, we identify the condition under which gradient orthogonalization becomes optimal: when the high-order terms exhibit a ``kink'' where their growth rate sharply increases. This condition can be viewed as an extreme-case approximation, suggesting that while gradient orthogonalization is beneficial, it may not fully exploit the potential of matrix-gradient methods.

At a high level, the rationale behind the isotropic curvature model, supported by empirical evidence, points to a crucial feature of deep learning optimization. Conventionally, first- and second-order information are considered; these can in general take arbitrary values, and preconditioning methods generally require the evaluation of both.\footnote{Note that there are exceptions. For example, Nesterov's accelerated gradient method extracts certain second-order information from first-order gradients.} However, in deep learning optimization, while first-order information (the gradient) is arbitrary and must be obtained via differentiation, it seems plausible that high-order information possesses a structure that can be leveraged without explicitly computing the Hessian. This may be due to the immense scale of systems like large language models and the isotropic properties underlying their architectures. If this hypothesis holds true, it could pave the way for a large class of \textit{auto-preconditioned} methods. In fact, both Muon and Adam (which can be seen as a smoothed version of signed stochastic gradient descent \cite{bernstein2018signsgd}) are examples of auto-preconditioned methods, as their conditioning of first-order gradients does not explicitly leverage any second-order information. It is likely that more novel methods will emerge from exploring more fine-grained auto-preconditioners.

From a technical perspective, the isotropic curvature model is currently phenomenological, and future research is needed to refine it and unveil its underlying mechanisms. First, rigorously justifying the isotropy assumption would be of great interest, and our current assumptions may require modification to be made mathematically precise. Work characterizing the geometric properties of the optimization landscape for deep learning could provide valuable insights \cite{neyshabur2015path,fang2021exploring,wang2025muon}. Another direction is to unveil the mechanism for the super-quadratic growth of high-order terms, which might be illuminated by connecting it to phenomena like the edge of stability \cite{wu2018sgd,cohengradient,cohenunderstanding} and the Hessian structure of neural networks \cite{zhang2024transformers,kunstner2024heavy}. More broadly, the isotropic curvature model and its future extensions could potentially be used in conjunction with the modular interpretation of optimization \cite{bernstein2024old,bernstein2024modular}. It would also be interesting to investigate whether the model could be extended from a single iteration to multiple iterations for convergence analysis. To better model practice, future research should also consider a noisy gradient $G$ from a mini-batch, since it is empirically observed that the relative performance of Muon compared to Adam depends on batch size \cite{essentialai2025practical}. In the same spirit, incorporating momentum via exponential moving average into the model is another important direction.

From a practical standpoint, the isotropic curvature model suggests a new approach to designing optimizers for large language models. This approach would begin by approximating the curvature function $H$ for the target neural network architecture, followed by solving the optimization program of the isotropic curvature model to find the update matrix. The first step requires care, as $H$ may vary across different layers and over the course of training. This variability might explain why the relative empirical performance of Muon compared to Adam varies over problem instances \cite{semenov2025benchmarking,wen2025fantastic}. Given a curvature function $H$, a key problem is whether the optimal update matrix can be found efficiently using methods suitable for GPU environments, such as the Newton--Schulz iteration. An additional desired property, suggested by Theorem \ref{thm:homo}, is that the transformation of singular values be monotone.

\subsection*{Acknowledgments}

I am grateful to Tim Lau for introducing me to Muon in early 2025. I would like to thank Xuyang Chen, Alex Damian, Shengtao Guo, Nathan Srebro, and Zihan Zhu for helpful discussions. This research was supported in part by NSF grant DMS-2310679 and Wharton AI for Business.

%\clearpage
%{\small
\bibliographystyle{abbrv}
\bibliography{ref}
%}

\end{document}